\newcommand{\Lt}{\mathcal{L}}
\newcommand{\Lnt}{\mathcal{L}_n}
\newcommand{\Drt}{D_{r}X_{t}}
\newcommand{\Drtjj}{D_{r}X^j_{t}}
\newcommand{\Drti}{D_{r}X^i_{t}}
\newcommand{\Dnrti}{D_{r}(X^n_{t})^i}
\newcommand{\Dnrtl}{D_{r} (X^n_{t})^l}
\newcommand{\Dnrtjj}{D_{r} (X^n_{t})^{j}}
\newcommand{\Dnt}{DX^{n}_{t}}
\newcommand{\Dntjj}{D(X^{n}_{t})^j}
\newcommand{\Dnrt}{D_{r}X^n_{t}}
\newcommand{\Dnrs}{D_{r}X^n_{s}}
\newcommand{\R}{\mathbb{R}}
\newcommand{\E}{\mathbb{E}}
\newcommand{\D}{\mathbb{D}}
\newcommand{\Qnt}{Q_{n}(t)}
\newcommand{\Qtij}{Q^{ij}(t)}
\newcommand{\Qntij}{Q_{n}^{ij}(t)}
\numberwithin{equation}{section}
\newtheorem{thm}{Theorem}[section]
\newtheorem{lem}[thm]{Lemma}
\newtheorem{rmk}{Remark}
\newtheorem{hypo}[thm]{Hypothesis}
\title{Integration by Parts Formula and Smoothness of Densities of Solutions to SDE's with Locally Lipschitz Coefficients}
\author[mahdieh]{M. Tahmasebi \corref{cor1}\fnref{fn1}}
\ead{mh_tahmasebi2000@yahoo.com} 
\author[mahdieh, shiva]{S. Zamani}
\ead{zamani@sharif.ir}
\address[mahdieh]{
Department of Applied Mathematics, Faculty of Mathematical Sciences,
Tarbiat Modares University,
P.O. Box 14115-134,  Tehran, Iran}
\address[shiva]{Gratuate School of Management and Economics, Sharif University of Technology,
P.O. Box 11155-9415,  Tehran, Iran}
\begin{document}
\begin{abstract}
In this work we prove the existence of a smooth density for the solution to an SDE with locally Lipschitz and semi-monotone drift, and will derive an exponential decay for this density and all of its derivatives as well.  
Our main tool in this paper is an integration by parts formula for the solution of the mentioned SDE in the Wiener space. We construct an approximating sequence of SDE's with globally Lipschitz drifts
and obtain  a uniform bound for the  integral of their solutions from which we derive the exponential decay for the derivatives of the density of the original SDE.
\end{abstract}

\maketitle
{\bf Subject classification}: {Primary 60H07, Secondary 60H10,62G07.}\\
{\bf Keywords}: smoothness of density, stochastic differential equation, semi-monotone drift, Malliavin calculus.
\section{Introduction}
Several studies have focused on the existence of smooth density and Integration by parts formula for the solutions of SDEs.  
Kusuoka and Stroock \cite{Kusu82}  show that an SDE whose coefficients are $C^{\infty}$-globally Lipschitz   and have polynomial growth, has a strong Malliavin differentiable solution of any order. Also, in \cite{Kusuoka85}, assuming some nondegeneracy condition they find an upper bound for all the derivatives of the density of solution, depending on the coefficients of SDE and their derivatives. This nondegeneracy condition 
could be also used to show the absolute continuity of the law of the solution of SDEs with respect to the Lebesgue measure and the smoothness of its density (see e.g. \cite{Nualart06, Jacod87}). In recent years, there were attemps to generalize these results to SDEs with non-globally Lipschitz coefficients.  
For example, 
Y\^{u}ki \cite{Yuki12}  derive some local H\"{o}lder continuity behaviour of the densities of the solutions to SDEs with singular drifts and locallly bounded coefficients. Using a  Fourier transform argument and the Malliavin Calculus, Marco \cite{Marco11}  shows that the solution of an SDE with smooth coefficients for which the derivatives of coefficients are bounded on a domain $D$, has a strong solution with a smooth density on $D$. 
For other references on this subject, we
refer the reader to \cite{Kusuo10, Marco1, Hiraba92}. \\
Assuming the nondegeneracy condition one can derive some integration by parts formula on the Wiener space (see e.g. \cite{Nualart06}).
This formula has many applications for example in financial mathematics. 
It is often of interest to investors to derive an option pricing formula and to know its sensitivity with respect to various parameters. The integration by parts formula obtained from Malliavin calculus can transform the derivative of the option price into weigthed integral of random variables. This gives much more accurate and fast converging numerical solution estimates than obtained by the classical methods \cite{Higa04, Bavou06}. 
The interested reader could see \cite{Alos08, Vive08, Leon07, Marco11, T02, Bally95}.\\
The SDE we consider has not  global Lipschitz 
 coefficients. Such equations mostly come from finance and biology and also dynamical systems and are more challenging when considered on infinite dimensional spaces. (see e.g. \cite{B99, Z95, GM07})\\
In this paper, we consider  an SDE with locally Lipschitz coefficients and uniformly elliptic diffusion. In \cite{tahzam}, we have proved the uniqueness and existence of a solution $X_t$ to this equation. Here we are going to prove the existence of a smooth density for $X_t$, and derive an exponential decay  for the derivatives of this density, at infinity. \\
Since the drift of the SDE is not globally Lipschitz, we will construct a sequence of SDEs with globally Lipschitz drifts whose solutions have uniformly bounded Malliavin derivatives with respect to $n$, and converge to $X_t$ almost everywhere. In this way we can apply the classical Malliavin calculus to these solutions, and by the uniform boundedness of the moments of  inverses of Malliavin covariance matrices and the convergence result we are able to prove an integration by parts formula in the Wiener space. Then we will prove that the densities of laws of the solutions to the constructed sequence of SDEs converge to the solution of the original SDE and derive the exponential decay  of the density of  the solution to it. \\  
The organization of the paper is as follows. In section 2, we present some notions of Malliavin calculus, as stated in \cite{Nualart06}, and by use of them we formulate our main results. In section 3, we will prove the integration by parts formula in the Wiener space. In section 4, we show that the densities of the approximating processes  converge almost surely to the density  of the solution to the original SDE.  Section 5 is devoted to finding a uniform bound for the integrals of the approximating processes and deriving the exponential decay of the density to the solution.  
\section{Formulation of main results}\label{ma}
Let $\Omega$ denote the Wiener space
$C_{_{0}}([0,T];R^d)$. We  furnish $\Omega$ with the
$\parallel.\parallel_{_{\infty}}$-norm making it a (separable)
Banach space. Consider $(\Omega,\mathcal{F},P)$ a complete
probability space, in which $\mathcal{F}$ is generated by the open
sets of the Banach space, $W_{t}$ is a d-dimensional Brownian motion, and $\mathcal{F}_{t}$ is the filtration generated by $W_t$. \\
By $H:=L^2([0,T];\R^d)$ we denote a Hilbert space. For $k, p \geq 1$, denote by $\D^{k,p}$  the domain of the $k$th order Maliavin derivative operator  with respect to the norm

$\begin{aligned} \qquad \parallel F\parallel_{_{k,p}}
                         &=\Big[E\vert F\vert^p+\parallel
                                D^{i_1, \cdots, i_k} F\parallel_{L^p(\Omega; H^{\otimes k})}^p\Big]^\frac{1}{p},
\end{aligned}$\\
and define $\D^{\infty}:= \bigcap_{k,p} \D^{k,p}$.\\
Now consider the following stochastic differential equation\\
\begin{equation}\label{equa}
dX_{t}=b(X_{t}) dt+\sigma(X_{t})dW_{t},  \qquad X_0=x_0.
\end{equation}
where  $b:\R^d \longrightarrow \R^d$  and  $\sigma:\R^d \longrightarrow M_{d\times d}(\R)$ are $C^\infty$ measurable functions. 
The function $\sigma$ is $C^\infty$ and all of its derivatives of order greater or equal 1 are bounded. The function $\sigma$ is globally Lipscitz and consider $k_1$ as its Lipschitz constant.  
$\sigma \sigma^*$ is uniformly continuous with modulus of continuity $\theta(.)$ and there exist $\lambda, \tilde{\lambda} > 0$ such that for every $x,u \in \R^d$
\begin{equation}\label{elliptic}
\lambda {\vert u \vert}^2 \leq \langle \sigma\sigma^*(x)u,u \rangle \leq \tilde{\lambda} {\vert u \vert}^2,
\end{equation}
where $*$ denotes transpose. Also for some positive constant $C_2$
\begin{equation}\label{C2}
\vert  \sigma^*(x)u \vert^2 \leq  C_2 \vert u \vert^2
\end{equation}
Note that $\sigma$ is called uniformly elliptic if the first inequality in (\ref{elliptic}) holds. Notice that in this papre, the function $b$ is not considered globally Lipschitz. \\
In \cite[Theorem 2.2.2 and Corollary 2.2.1]{Nualart06} Nualart
shows that  SDEs which have globally Lipschitz coefficients with 
polynomial growth for all of their derivatives, have the strong unique solutions in $\D^{\infty}$. Also, he stated there the linear equation in which Malliavin derivative satisfy.
 In \cite{tahzam} we have shown that assuming monotonicity for the drifts existence of a unique strong solution $X_t$ in $\D^{\infty}$ to SDE (\ref{equa}). In this paper, we assume that the SDE (\ref{equa}) has a strong unique solution in $\D^\infty$. 
We denote by $\Lt$ the second-order differential operator associated to SDE (\ref{equa}):
\begin{equation*}
\Lt= \frac{1}{2} \sum_{i,j =1}^{d} (\sigma \sigma^*)_{j}^i(x) \partial_{i} \partial_j +  \sum_{i=1}^d b^i(x)\partial_i
\end{equation*}
Consider the following stochastic differential equations\\
\begin{equation}\label{equan}
dX^n_{t}=b_n(X^n_{t}) dt+\sigma(X^n_{t})dW_{t},  \qquad X^n_0=x_0.
\end{equation}
where the functions $b_n$ are globally Lipschitz and all of their derivatives have polynomial growth;  i.e.,
 for each $x \in \R^d$ and each multiindex $\alpha$ with $\vert \alpha \vert = m$, there exist positive constants $q_m$ and $\Gamma_m$ which are independent of $n$ and 
 \begin{equation}\label{roshdbn}
 \vert \partial_\alpha b_n(x) \vert^2 \leq \Gamma_m(1+\vert x \vert^{q_m})
 \end{equation}
 and $b_n(x) =b(x)$ for every $x \in \Omega_n$ where $\Omega = \Omega_n$,\\
As we pointed out, by \cite{Nualart06} there exists unique strong solutions for SDEs (\ref{equan}) and $X_t^n \in \D^\infty$. 
Also, for $r \leq t$,  
\begin{equation}\label{firstderiv}
\Dnrti =\sigma^i(X^n_{r})+\int_{r}^{t}\nabla b_n^i(X^n_{s}).\Dnrs
ds+\int_{r}^{t}\nabla \sigma^i_l(X^n_{s}).\Dnrs dW^l_{s}
\end{equation}
and  for $r > t$, $\Dnrti =0$. Here $u.C$ denotes the product $C^*u$ for a vector $u$ and a matrix $C$, for example $\nabla f(x). \Dnrt= \sum_{l=1}^d \nabla_l f(x) \Dnrtl$. We used the upper index to show a specific row, and the subindex to show a specific column of a matrix.
As before denote the infinitesimal operator associated to these SDE's by $\Lnt$.  

Throughout the paper we assume the following Hypothesis.
\begin{hypo}\label{hypothesis}
 For each $p \geq 1$, there exist some positive constants $c_p$, $\alpha_p$ and $\gamma_p$ such that
\begin{enumerate} 
    \item The sequence $\{X_t^n\}_{n \geq 1}$ converges to $X_t$  in $L^p(\Omega)$.
    \item  
    \begin{equation}\label{karand}
\sup_{n \geq 1}\sup_{0 \leq t \leq T}\E\Big[\Vert \Dnt \Vert_{H}^p\Big] \leq c_{p}.
\end{equation}
   \item 
\begin{equation}\label{Ln}
\Lnt \vert X_t^n - x_0\vert^{p}  \leq \alpha_p \vert X_t^n -x_0\vert^{p} + \gamma_p 
\end{equation}
 \end{enumerate}
 \end{hypo}
\begin{rmk}
Applying It\^o's formula and Inequality (\ref{Ln}) and then using Gronwall's inequality, for maybe rewritting $c_p$, imply that
\begin{equation}\label{ssup}
\sup_{n \geq 1}\sup_{0 \leq t \leq T}\E\Big[\vert X_{t}^n\vert^p\Big] \leq c_{p},
\end{equation}
\end{rmk}

For every $0 \leq t \leq T$, denote the Malliavin covariance matrix of $X_t^n$ and $X_t$  by $Q_n(t)$ and $Q(t)$, respectively.
We know that the diffusion coefficient $\sigma$ in (\ref{equa}) is uniformly elliptic. So that, the H\"{o}rmander condition holds for $\sigma$ and  by Nualart \cite[Theorem 2.3.3 and its proof]{Nualart06} the solutions $X_t^n$ not only have a.s. invertible Malliavin covariance matrices  
and the nondegeneracy condition holds for them, but also they have infinitely differentiable densities. 
Concerning the moments of determinants of inverse Malliavin covariance matrices, assume that
\begin{hypo}\label{karanQn}
There exist some positive constants $c$ and $\lambda_0$ such that for each $p \geq 1$,
\begin{equation}
\sup_{n} \E\Big[det (\Qnt)^{-p}\Big] <
c (\lambda_0 t)^{-d(p-\frac{1}{2})}
\end{equation}
\end{hypo}
In Appendix we will show that this assumption is satisfied by some weak assumptions on $\sigma$.\\
By our assumptions, Hypotheses {\rm \ref{hypothesis}} and {\rm \ref{karanQn}}, we can derive an integration by parts formula in the Wiener space. By use of which and the uniform exponential boundedness of $X_t^n$ with respect to $n$, we will show the exponential decay of the density of the solution to SDE (\ref{equa}).
\section{Integration by parts formula}
In this section, we prove Theorem \ref{IBPg}.  By the integration by parts formula in \cite[Proposition 2.1.4.]{Nualart06} and Hypothesis (\ref{karanQn}), there exists a family of random variables $\{L_{_{\beta}}\}$ depending
on multiindices $\beta$ of length strictly larger than 1 with
coordinates $\beta_{_{j}}\in \{1,...,d\}$,  
such that for every $G \in \D^{\infty}$
\begin{equation}\label{ibpn}
\E[\partial_{_{\alpha}}g(X_t^n)G]=E[g(X_t^n)L_{_{\alpha}}^n(X_t^n,G)],
\end{equation}
and
\begin{equation}\label{karanLn}
\parallel L_{_{\alpha}}^n(X_t^n,G)\parallel_{_{p}} \leq
c_{_{p,q}}\parallel det((\Qnt)^{-1})\parallel_{_{\beta_0}}^m
\parallel DX_t^n\parallel_{_{k,\gamma}}^n
\parallel G
\parallel_{_{k,q}}
\end{equation}
where 
\begin{equation*}
L_{_{(i)}}^n(X_t^n,G)=\sum_{j=1}^{m}{\delta\Big(G(\Qnt^{-1})^{i}_{j}\Dntjj\Big)},
\end{equation*}
and $\delta$ denotes the adjoint of the Mallaivin derivative operator $D$. \\
\begin{thm}\label{IBPg}
Let $g \in C^{m+1}$, and all of its derivatives be bounded and have polynomial growth. If Hypotheses {\rm \ref{hypothesis}} and  {\rm \ref{karanQn}} hold, then for every $G \in \D^{\infty}$ and every multiindex $\alpha=(\alpha_1, \cdots, \alpha_d)$ with $\vert \alpha \vert=m$, there exists a function $H_{\alpha}(X_{t},G)$ such that
\begin{equation}\label{IBPP}
 \E\Big(\partial_{\alpha}g(X_{t})G\Big)=\E\Big(g(X_{t})H_{\alpha}(X_{t},G)\Big),
\end{equation}\\
where $H_{\alpha}=H_{\alpha_{k}}(H_{(\alpha_{1},...,\alpha_{k-1})})$.
\end{thm}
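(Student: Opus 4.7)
The plan is to pass to the limit $n\to\infty$ in the approximate integration by parts formula (\ref{ibpn}) that already holds for the globally-Lipschitz approximations $X_t^n$. I would first settle the order-one case $|\alpha|=1$ and then iterate, the iteration being exactly what the recursion $H_\alpha = H_{\alpha_{k}}(H_{(\alpha_1,\ldots,\alpha_{k-1})})$ encodes.

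For $|\alpha|=1$, write (\ref{ibpn}) with $\alpha=(i)$. The estimate (\ref{karanLn}), combined with Hypothesis \ref{karanQn} for the moments of $\det (Q_n(t))^{-1}$ and (\ref{karand}) for the moments of $\|DX^n_t\|_H$, bounds $L_{(i)}^n(X_t^n,G)$ in every $L^p(\Omega)$ uniformly in $n$. Along a subsequence the weights converge weakly in $L^2$ to a limit that I denote $H_{(i)}(X_t,G)$, and the uniform $L^p$-bound passes to this limit. On the left-hand side, Hypothesis \ref{hypothesis}(1) yields $X_t^n\to X_t$ in $L^p$; combined with polynomial growth of $\partial_i g$ and the uniform moment estimate (\ref{ssup}), a Vitali argument gives $\partial_i g(X_t^n)G \to \partial_i g(X_t)G$ in $L^1$. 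For the right-hand side, the same reasoning shows $g(X_t^n)\to g(X_t)$ strongly in $L^2$, and strong-times-weak convergence yields
\begin{equation*}
  \E\bigl[g(X_t^n)L_{(i)}^n(X_t^n,G)\bigr] \longrightarrow \E\bigl[g(X_t)H_{(i)}(X_t,G)\bigr],
\end{equation*}
which establishes (\ref{IBPP}) at order one.

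For a multiindex $\alpha$ of length $m\ge 2$, I would proceed by induction. Writing $\partial_\alpha g = \partial_{\alpha_{m}}\bigl(\partial_{(\alpha_1,\ldots,\alpha_{m-1})}g\bigr)$, I apply the order-one formula to $\partial_{\alpha_m}$ with the previously constructed weight $H_{(\alpha_1,\ldots,\alpha_{m-1})}(X_t,G)$ in place of $G$. This defines $H_\alpha(X_t,G) := H_{(\alpha_m)}\bigl(X_t,H_{(\alpha_1,\ldots,\alpha_{m-1})}(X_t,G)\bigr)$ and delivers the claimed recursion.

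The main obstacle is that, for $H_{(\alpha_1,\ldots,\alpha_{m-1})}(X_t,G)$ to serve as the \emph{new} $G$ at the next induction step, it must carry enough Malliavin regularity — mere $L^p$ control is insufficient, since the order-one weight is built from a Skorokhod integral. This is overcome by using (\ref{karanLn}) in its full Sobolev form: the same hypotheses bound $L_\alpha^n(X_t^n,G)$ not only in $L^p$ but in $\mathbb{D}^{k,p}$, uniformly in $n$, because $L_\alpha^n$ is an iterated Skorokhod integral of smooth functionals of $X_t^n$ and the denominator factor $(Q_n(t))^{-1}$ has uniformly controlled Malliavin derivatives by Hypothesis \ref{karanQn} together with (\ref{karand}). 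Extracting the weak limit in $\mathbb{D}^{k,p}$ (rather than in $L^p$) at each induction step then propagates the Malliavin regularity needed to close the argument.
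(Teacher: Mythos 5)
Your order-one step is essentially the paper's argument: uniform $L^p$ bounds on the weights $L^n_{\alpha}(X_t^n,G)$ from (\ref{karanLn}), (\ref{karanQn}) and (\ref{karand}), a weak limit $H_{\alpha}(X_t,G)$, dominated/uniform-integrability convergence on the left of (\ref{ibpn}), and strong-times-weak convergence on the right. Where you diverge is at higher order. The paper does \emph{not} induct on the length of the multiindex: it applies Nualart's formula (\ref{ibpn}) at level $n$ directly for the \emph{full} multiindex $\alpha$ with $\vert\alpha\vert=m$ (legitimate there, since $X_t^n\in\D^{\infty}$ and is nondegenerate), obtains the uniform $L^p$ bound on the full weight $L^n_{\alpha}(X_t^n,G)$ from (\ref{karanLn}), and passes to the limit \emph{once}. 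The recursion $H_{\alpha}=H_{\alpha_k}(H_{(\alpha_1,\ldots,\alpha_{k-1})})$ is then read off a posteriori from the fact that (\ref{IBPP}) holds for every $G\in\D^{\infty}$, rather than being used as the construction. This ordering is what lets the paper avoid the regularity problem you correctly identify.

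That problem is a genuine gap in your version as written. To run the induction you must feed $H_{(\alpha_1,\ldots,\alpha_{m-1})}(X_t,G)$ back in as the new $G$, which requires it to lie in $\D^{\infty}$ (or at least in $\D^{k,q}$ for the relevant $k,q$), and you propose to secure this by extracting the weak limit in $\D^{k,p}$ from a uniform $\D^{k,p}$ bound on $L^n_{\alpha}$. But no such bound is available from the stated hypotheses: (\ref{karanLn}) controls only the $L^p$ norm of $L^n_{\alpha}$, Hypothesis \ref{karanQn} controls moments of $\det(Q_n(t))^{-1}$ and says nothing about Malliavin derivatives of $(Q_n(t))^{-1}$, and Hypothesis \ref{hypothesis}(2) bounds only $\E[\Vert DX_t^n\Vert_H^p]$, not higher-order Sobolev norms of $X_t^n$ uniformly in $n$. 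Your sentence asserting that ``the denominator factor $(Q_n(t))^{-1}$ has uniformly controlled Malliavin derivatives'' is exactly the missing estimate, not a consequence of the hypotheses. Either prove those uniform $\D^{k,p}$ bounds separately, or restructure as the paper does: take the limit of the full-multiindex weight in $L^p$ and obtain the recursion afterwards by the duality/uniqueness argument over all $G\in\D^{\infty}$.
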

\begin{proof}
Fix $p \geq 1$. Applying (\ref{karand}) and (\ref{karanQn}) to (\ref{karanLn}) we conclude that  $L_{_{\alpha}}^n(X_t^n,G)$ is uniformly bounded and consequently weakly convergent in $L^p(\Omega)$. Let us  denote its limit by $H_{_{\alpha}}(X_t,G)$. 
We are going to show that 
\begin{equation}\label{right}
\E\Big[g(X_{t}^{n})L_{_{\alpha}}^n(X_t^n,G)\Big]  \longrightarrow \E\Big[g(X_{t})H_{_{\alpha}}(X_t,G)\Big] \qquad as ~n \rightarrow \infty.
\end{equation}
By Hypothesis \ref{hypothesis} part (1), there exists a uniformly integrable subsequence of $X_{t}^{n}$
which converges to $X_{t}$ a.s.. Since $\partial_{_{\alpha}}g(.)$ has
polynomial growth, $\partial_{_{\alpha}}g(X_{t}^{n})$s are uniformly integrable and
a.s. convergent to $\partial_{_{\alpha}}g(X_{t})$. Choose $p_1 \geq 1$ such that  $\frac{1}{p}+\frac{1}{p_1}=1$. Thus
\begin{equation}\label{gn}
\partial_{_{\alpha}}g(X_{t}^{n})
\longrightarrow \partial_{_{\alpha}}g(X_{t}) \qquad  in ~L^{p_1}(\Omega),
\end{equation}
Now by Cauchy-Schwarz inequality

$\begin{aligned}
\Big\vert \E\Big[g(X_{t}^{n})L_{_{\alpha}}^n(X_t^n,G)\Big] & -\E\Big[g(X_{t})H_{_{\alpha}}(X_t,G)\Big]\Big\vert \\
&\leq \Big\vert \E\Big[g(X_{t}^{n})L_{_{\alpha}}^n(X_t^n,G)\Big]-\E\Big[g(X_{t})L_{_{\alpha}}^n(X_t^n,G)\Big]\Big\vert \vspace{1cm}\\
 &+ \Big\vert \E\Big[g(X_{t})L_{_{\alpha}}^n(X_t^n,G)\Big]-\E\Big[g(X_{t})H_{_{\alpha}}(X_t,G)\Big]\Big\vert \\
&\leq \E\Big[\vert g(X_{t}^{n})-g(X_{t})\vert^{p_1}\Big]^\frac{1}{p_1}\E\Big[\vert L_{_{\alpha}}^n(X_t^n,G)\vert^p\Big]^\frac{1}{p}\\
&+ \Big\vert\E\Big[g(X_{t})L_{_{\alpha}}^n(X_t^n,G)\Big]-\E\Big[g(X_{t})H_{_{\alpha}}(X_t,G)\Big]\Big\vert ,
\end{aligned}$\\
\\
By (\ref{gn}) and the uniform boundedness of $\Vert L_{_{\alpha}}^n(X_t^n,G)\Vert_p$ the first term in the right hand side of the above inequality tends to $0$. 
Since $g(X_t) \in L^{p_1}(\Omega)$ and $L_{_{\alpha}}^n(X_t^n,G) \rightharpoonup H_{_{\alpha}}(X_t,G)$ weakly in $L^p(\Omega)$, the second term also
tends to $0$ and (\ref{right}) holds.\\
Now, since $\partial_{_{\alpha}}g(X_{t}^{n})$ converges a.s. to $\partial_{_{\alpha}}g(X_{t})$, and $\partial_{_{\alpha}}g \in C^{1}$ is
 a bounded function, by Lebesque's dominated convergence theorem the following holds also true.
\begin{equation}\label{left}
\E\Big[\partial_{_{\alpha}}g(X_t^n)G\Big] \longrightarrow \E\Big[\partial_{_{\alpha}}g(X_t)G\Big]  \qquad as ~n \rightarrow \infty.
\end{equation}
Therefore, letting $n$ tend to $\infty$ in both sides of (\ref{ibpn}) and using (\ref{left}) and (\ref{right}) the integration by parts formula (\ref{IBPg}) results.\\
Notice that by (\ref{IBPP}) for every $k \geq 2$ and every multiindex $\alpha:= (\alpha_1, \cdots, \alpha_k)$, there  exist some functions $H_{\alpha}(X_t,G)$ and $H_{(\alpha_1, \cdots, \alpha_{k-1})}(X_t,G)$ such that 
\begin{equation}\label{ddd}
 \E\Big(\partial_{\alpha}g(X_{t})G\Big)=\E\Big(g(X_{t})H_{\alpha}(X_{t},G)\Big),
\end{equation}
and
\begin{equation*}
 \E\Big(\partial_{(\alpha_1, \cdots, \alpha_{k-1})}g(X_{t})G\Big)=\E\Big(g(X_{t})H_{(\alpha_1, \cdots, \alpha_{k-1})}(X_{t},G)\Big).
\end{equation*}
Also there exist a function $H_{\alpha_k}$ such that 
\\
$\begin{aligned}
 \hspace{2cm}\E\Big(\partial_{(\alpha_1, \cdots, \alpha_{k-1})}\partial_{\alpha_k}g(X_{t})G\Big) &=\E\Big(\partial_{\alpha_k}g(X_{t})H_{(\alpha_1, \cdots, \alpha_{k-1})}(X_{t},G)\Big) \\
 &= \E\Big(g(X_{t})H_{\alpha_k}(X_t, H_{(\alpha_1, \cdots, \alpha_{k-1})}(X_{t},G)\Big)\\
 & = \E\Big(g(X_{t})H_{\alpha_k} H_{(\alpha_1, \cdots, \alpha_{k-1})}(X_{t},G)\Big)
\end{aligned}$\\
\\
Since this equality holds for every $G \in \D^{\infty}$, using (\ref{ddd}) implies that $H_{\alpha}=H_{\alpha_{k}}(H_{(\alpha_{1},...,\alpha_{k-1})})$.
\end{proof}
 \section{Pointwise Convegence of the sequence of densities}
The existence of a density for $X_t$, called $\rho(t,x_0,.)$, is a result of Theorem 2.1.1 in \cite{Nualart06}. But in order to find a $C^{\infty}$ density, note that there exists a sequence $\{\rho^n(t,x_0,.)\}_n$ of $C^{\infty}$ densities associated to $\{X_t^n\}$. Hereby we are going to prove the convergence of $\rho^n(t,x_0,.)$ to $\rho(t,x_0,.)$ in some sense, thereby finding some bounds for the derivatives of $\rho(t,x_0,.)$.\\
To this end,  we will prove the following lemma by which and  Lemma 11.4.1 in \cite{Stroock79} the convergence of densities  would results in $L^1(\Omega)$.  
\begin{lem}\label{rho-n-psi}
If Hypothesis {\rm \ref{karanQn}} holds, then there exists a non-decreasing function $\psi: (0, \infty) \longrightarrow (0, \infty)$, depending only on $d$, $\lambda$ and $\Lambda$ such that $lim_{\epsilon\rightarrow 0}\psi(\epsilon)=0$ and for every $R >0$
\begin{equation*}
\sup_{n \geq 1}\int_{B(0,R)}{\lvert
\rho^n(t,x_0,y+h)-\rho^n(t,x_0,y)}\rvert dy \leq t^{-\nu}\psi(\lvert
h\rvert),
\end{equation*}
where $B(0,R)$ is the open ball with radius $R$ centered at origin.
\end{lem}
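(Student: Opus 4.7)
The plan is to use the Malliavin integration by parts formula of Theorem \ref{IBPg}, iterated so as to extract all $d$ partial derivatives needed to express $\rho^n$ itself plus one extra derivative in an arbitrary direction, in order to obtain a uniform-in-$n$ sup-norm bound on $\nabla \rho^n(t,x_0,\cdot)$. From such a pointwise bound, the $L^1$-modulus of continuity on $B(0,R)$ required by the lemma follows at once from the mean value theorem.

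The first step is to derive, for each coordinate $k$, the Malliavin-type representation
\begin{equation*}
\partial_k \rho^n(t,x_0,z) = -\,\E\big[\mathbf{1}_{\{X_t^n \geq z\}}\, H_{(1,\ldots,d,k)}(X_t^n,1)\big],
\end{equation*}
where $H_{(1,\ldots,d,k)}(X_t^n,1)$ is the iterated Skorohod weight obtained from the $n$-level IBP identity \eqref{ibpn} applied $d+1$ times: first once in each coordinate to represent the density itself as $\rho^n(t,x_0,z) = \E[\mathbf{1}_{\{X_t^n \geq z\}} H_{(1,\ldots,d)}(X_t^n,1)]$, and then once more in the direction~$k$. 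To pass from Theorem \ref{IBPg} (which is stated for $g \in C^{m+1}$ with bounded derivatives) to the indicator function, I would mollify $\mathbf{1}_{\{\cdot \geq z\}}$, apply IBP to the mollification, and pass to the limit in the resulting identity using the $L^p$ bound \eqref{karanLn} on the Skorohod weight.

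Next, I would bound $\|H_{(1,\ldots,d,k)}(X_t^n,1)\|_1$ uniformly in $n$ via \eqref{karanLn} (at level $n$ with $G=1$). The factor $\|\det(Q_n(t))^{-1}\|_{\beta_0}^m$ is controlled by Hypothesis~\ref{karanQn}, producing a power $t^{-\nu}$ with $\nu$ depending only on $d$ and the indices appearing in \eqref{karanLn}; the factor $\|DX_t^n\|_{k,\gamma}^n$ is controlled by Hypothesis~\ref{hypothesis}(2) together with the analogous uniform bounds on higher-order Malliavin derivatives of $X_t^n$, obtained via Gronwall applied to the linear SDEs they satisfy and the polynomial growth \eqref{roshdbn}. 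This gives $\sup_{n\geq 1}\|\nabla \rho^n(t,x_0,\cdot)\|_\infty \leq C\,t^{-\nu}$. Integrating the mean value estimate over $B(0,R)$ then yields
\begin{equation*}
\int_{B(0,R)} \big|\rho^n(t,x_0,y+h)-\rho^n(t,x_0,y)\big|\,dy \;\leq\; C\,|B(0,R)|\,t^{-\nu}\,|h| \;=:\; t^{-\nu}\,\psi(|h|),
\end{equation*}
with $\psi(\epsilon) = C\,|B(0,R)|\,\epsilon$, which is non-decreasing and satisfies $\lim_{\epsilon\to 0}\psi(\epsilon) = 0$.

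The main technical obstacle is the rigorous justification of the IBP representation for $\partial_k \rho^n$ against the indicator of $\{X_t^n \geq z\}$, since that function does not belong to $C^{m+1}$ as required by Theorem~\ref{IBPg}. The standard mollification-and-limit argument works provided the Skorohod weights are uniformly controlled in the approximation parameter, and this is exactly the point at which all hypotheses of the paper are used simultaneously. A secondary technical point is the verification that the higher-order Malliavin norms $\|DX_t^n\|_{k,\gamma}^n$ appearing in \eqref{karanLn} are genuinely uniformly bounded in $n$: Hypothesis~\ref{hypothesis}(2) supplies only the first Malliavin derivative, so one must propagate the bound inductively to $D^j X_t^n$ for $j \geq 2$ through the linearised SDEs they satisfy and the uniform polynomial-growth assumption \eqref{roshdbn}.
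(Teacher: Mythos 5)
Your argument is a genuinely different route from the paper's. The paper does not touch the Malliavin machinery for this lemma at all: it applies Girsanov's theorem to strip off the drift, reducing everything to the driftless diffusion $dZ_t=\sigma(Z_t)\,dW_t$, and then quotes the a priori $L^1$-equicontinuity estimate of Stroock--Varadhan (Theorem 9.2.12) for nondegenerate driftless diffusions, which is where the function $\psi$ and the exponent $\nu$ come from (hence the claimed dependence on $d,\lambda,\Lambda$ only). The uniformity in $n$ is obtained very cheaply there: on paths confined to $B(0,R)$ one has $b_n=b$, so $u_{t,n}=\sigma^{-1}b_n$ is bounded by $B_T/\sqrt{\lambda}$ with $B_T=\sup_{B(0,R)}|b|$ independently of $n$, which controls the Girsanov density $S^n_T(1)$ in $L^2$ uniformly in $n$. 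Your approach instead anticipates the argument the paper itself uses later in Theorem \ref{densg}: the representation $\partial_k\rho^n(t,x_0,z)=-\E[\mathbf{1}_{\{X_t^n\geq z\}}L^n_{(1,\dots,d,k)}(X_t^n,1)]$ (Nualart, Proposition 2.1.5, legitimately available at level $n$ since $X_t^n$ is nondegenerate), the bound \eqref{karanLn} combined with Hypothesis \ref{karanQn}, and then the mean value theorem. This yields a stronger, Lipschitz modulus $\psi(\epsilon)=C|B(0,R)|\epsilon$, at the price of invoking the full Malliavin apparatus where the paper needs only ellipticity and the local agreement $b_n=b$.

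Two caveats. First, the quantity $\Vert DX_t^n\Vert_{k,\gamma}$ in \eqref{karanLn} involves Malliavin derivatives of all orders up to $k$, while Hypothesis \ref{hypothesis}(2) controls only the first; you correctly flag that the higher-order bounds must be propagated through the linearized SDEs using \eqref{roshdbn}, and this step is genuinely needed and not written out anywhere --- though the paper commits exactly the same omission in the proof of Theorem \ref{densg}, so your proof is no less rigorous than the paper's own later argument. Second, your $\psi$ depends on $R$ and on constants tied to the drift (through $c_p$, $\Gamma_m$, $q_m$), not only on $d,\lambda,\Lambda$ as the lemma asserts; this is a discrepancy with the literal statement, but it is harmless for the only downstream use (Lemma \ref{densm} via \eqref{phim}, which requires only independence of $n$ for fixed $R$), and the paper's own proof produces an $R$- and $b$-dependent prefactor $4e^{3T B_T^2/\lambda}$ that it likewise absorbs silently. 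With these points acknowledged, your argument is a valid alternative proof.
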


\begin{proof} 
This proof is motivated by the proof of Theorem 9.1.15 in \cite{Stroock79}. We will use Girsanov theorem to omit the drift terms of $SDE$s associated to $X_t^n$s.
 \\
Let $ P^n(t,x_0,.)$ be the transition probability associated to $X_t^n$. 
By Girsanov Theorem \cite{Liptser01}, for every $g \in C_0^\infty[0,T] \times \R^d$
\begin{equation*}
\int_{B(0,R)} \Big[\int_0^T g_h(X_s^n)-g(X_s^n) ds \Big]dP^n = \int_{B(0,R)}\Big(\int_0^T g_h(Z_s)-g(Z_s) ds \Big)S_T^n(1)dP
\end{equation*}
where $g_h(x)=g(x+h), ~ x\in \R^d$ and 
\begin{equation*}
S_T^n (c)=[S_t^n(c)]_{_{t=T}} =\Big[ exp\Big(-\frac{c^2}{2} \int_{0}^T \vert u_{t,n}(w) \vert^2 dt+c\int_{0}^{T} \langle u_{t,n}(w) ,  dW_t \rangle \Big)\Big]_{t=T},
\end{equation*}
in which $u_{t,n}$ and $Z_t$ satisfy
\begin{equation}\label{ut}
\sigma(Z_t) u_{t,n}(w) = b_n(Z_t), \quad  and \quad dZ_t = \sigma(Z_t) dW_t .
\end{equation}
By Cauchy-Schwarz inequality 
 
\begin{align}
\int_{B(0,R)} \Big(\int_0^T & g_h(Z_s) -g(Z_s) ds\Big) S_T^n(1)dP   \nonumber\\
& \leq \sqrt{T} \Big[\E  \int_0^T \vert g_h(Z_s)-g(Z_s) \vert^2  ds \Big]^{\frac12} 
     \Big[\int_{Z_t \in B(0,R)} [S_T^n(1)]^2 dp\Big]^{\frac12} .   \label{St}
 \end{align}\\
Now we are going to find some appropriate bound for $\int_{Z_t \in B(0,R)} [S_T^n(1)]^2 dp$  not dependent on $n$. 
Set 
$$dK_t(w) :=\langle u_{t,n}(w) ,  dW_t (w)\rangle$$ 
and use Cauchy-Schwarz inequality and  the submartingale property of $S_t^n(4)$, 
then

$\begin{aligned} 
 \int_{Z_t \in B(0,R)} [S_T^n(1)]^2 dp & \leq  \int_{Z_t \in B(0,R)} \Big[ exp\Big(-4 \int_{0}^T \vert  u_{t,n}(w) \vert^2 dt+2\int_{0}^{T} dK_t \Big) \\
& \qquad \qquad . exp\Big(3 \int_{0}^T \vert  u_{t,n}(w) \vert^2 dt \Big)\Big] dP\nonumber\\
 &\leq \E\Big[ S_T^n(4) \Big]^{\frac12} \int_{Z_t \in B(0,R)} \Big[exp\Big(6 \int_{0}^T \vert  u_{t,n}(w) \vert^2 dt\Big)\Big]^{\frac12} \nonumber\\
  & \leq  \Big[\int_{Z_t \in B(0,R)} exp\Big(6 \int_{0}^T \vert  u_{t,n}(w) \vert^2 dt\Big)dP\Big]^{\frac12}.  \\ 
\end{aligned}$\\
\\
Now, for $n$ large enough and every $x \in B(0,R)$ we know that $b_n(x) =b(x)$. So, multiplying  two sides of (\ref{ut}) by $u_{t,n}(w)$ and using Hypothesis \ref{hypothesis} for $B_T := \sup_{x \in B(0,R)} b(x)$ we have 
 $$\Big[\int_{Z_t \in B(0,R)} exp\Big(6 \int_{0}^T \vert  u_{t,n}(w) \vert^2 dt\Big)dP\Big]^{\frac12} \leq 
 e^{\frac{1}{\lambda}3T(B_T)^2}. $$
Putting this bound into (\ref{St}) and  using Theorem 9.2.12 in \cite{Stroock79}, we will have  
\begin{equation*} 
\int_{B(0,R)} \Big[\int_0^T g_h(X_s^n)-g(X_s^n) ds \Big]dP^n  \leq 4 e^{\frac{1}{\lambda}3T(B_T)^2}\parallel g\parallel t^{-\nu}\psi(\lvert
h\rvert).
\end{equation*}\\
Since for every $g \in C_b([0,T] \times \R^d)$ there exist an increasing sequence $\{g_n\}$ in $ C_0^\infty ([0,T] \times \R^d)$ which converges to $g$,  the monotone convergence theorem completes the proof. 
\end{proof}
Here, we are ready to show the pointwise convergence of densities $\rho^n(t,x_0,.)$ to $\rho(t,x_0,.)$.
\begin{lem}\label{densm}
For every $R > 0$ if $y \in B(0,R)$, then
\begin{equation}
\rho^{n}(t,x_0,y) \longrightarrow \rho(t,x_0,y).
\end{equation}
\end{lem}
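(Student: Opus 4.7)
The plan is to deduce the pointwise convergence from two already-established facts: weak convergence of the laws $P^n(t,x_0,\cdot) := \mathrm{Law}(X_t^n)$ to $P(t,x_0,\cdot) := \mathrm{Law}(X_t)$, and the uniform $L^1$-modulus of continuity of $\{\rho^n(t,x_0,\cdot)\}$ on $B(0,R)$. The weak convergence is immediate: Hypothesis \ref{hypothesis}(1) gives $X_t^n \to X_t$ in $L^p(\Omega)$, hence in distribution. The equicontinuity is exactly the content of Lemma \ref{rho-n-psi}. Feeding these two ingredients into Lemma 11.4.1 of \cite{Stroock79}, as the authors announce, yields the claim.

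In more detail, I would argue as follows. The uniform $L^1$-modulus of continuity on $B(0,R)$, combined with the trivial bound $\|\rho^n(t,x_0,\cdot)\|_{L^1(B(0,R))} \leq 1$, triggers the Frechet--Kolmogorov compactness criterion, so $\{\rho^n(t,x_0,\cdot)\}$ is precompact in $L^1(B(0,R))$. Any subsequential $L^1$-limit $\tilde\rho$ must represent the restriction of the weak limit $P(t,x_0,\cdot)$ to $B(0,R)$, which one verifies by testing against $\varphi \in C_c(B(0,R))$ and invoking weak convergence of $P^n$ to $P$. Uniqueness of Radon--Nikodym densities then forces $\tilde\rho = \rho(t,x_0,\cdot)$ a.e.\ on $B(0,R)$, so the whole sequence converges to $\rho$ in $L^1(B(0,R))$. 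The modulus-of-continuity estimate passes to this limit, endowing $\rho(t,x_0,\cdot)$ and the whole family $\{\rho^n\}$ with a common $L^1$-modulus on $B(0,R)$; a standard mollification argument, writing $\rho^n - \rho = ((\rho^n)_\varepsilon - \rho_\varepsilon) + (\rho^n - (\rho^n)_\varepsilon) + (\rho_\varepsilon - \rho)$, then converts $L^1$-convergence into pointwise convergence at every $y \in B(0,R)$ by first choosing $\varepsilon$ small (killing the last two terms via $\psi(\varepsilon)$) and then $n$ large (killing the first via $L^1$-convergence of the smoothed densities at the fixed point $y$).

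The main obstacle is the identification of any subsequential $L^1$-limit with the restriction of $\rho(t,x_0,\cdot)$ to $B(0,R)$: weak convergence of the laws on $\R^d$ only controls integrals against globally bounded continuous functions, so one must localize carefully with test functions in $C_c(B(0,R))$ and be sure no mass creeps in from outside via the density approximation. Once that identification is secured, the passage from $L^1$ to pointwise convergence via mollification is essentially formal. Lemma 11.4.1 of \cite{Stroock79} bundles both steps into a single statement about densities of weakly convergent measures with a common $L^1$-modulus of continuity, so the authors' proof is expected to reduce to verifying its hypotheses, both of which are in place.
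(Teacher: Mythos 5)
Your two ingredients --- weak convergence of the laws from Hypothesis \ref{hypothesis}(1) and the uniform $L^1$-modulus from Lemma \ref{rho-n-psi} --- are exactly the ones the paper feeds into Lemma 11.4.1 of \cite{Stroock79}, and up to the conclusion $\rho^{n}(t,x_0,\cdot)\to\rho(t,x_0,\cdot)$ in $L^1(B(0,R))$ your argument coincides with (indeed, usefully unpacks) the paper's. The divergence is in the last step, and there your mollification argument has a gap: the estimate of Lemma \ref{rho-n-psi} is an $L^1$-modulus, so it bounds $\int_{B(0,R)}\vert\rho^{n}(t,x_0,y)-(\rho^{n})_\varepsilon(t,x_0,y)\vert\,dy$ by $t^{-\nu}\psi(\varepsilon)$ but gives no control on $\vert\rho^{n}(t,x_0,y)-(\rho^{n})_\varepsilon(t,x_0,y)\vert$ at the \emph{fixed} point $y$; the same applies to the term $\rho_\varepsilon-\rho$. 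Hence choosing $\varepsilon$ small does not kill the second and third terms of your three-term decomposition pointwise, and your scheme only reproduces $L^1$ (hence a.e.-subsequential) convergence rather than convergence at every $y\in B(0,R)$. For the record, the paper's own final step is different but comparably thin: it extracts a subsequence converging a.e.\ and then appeals to continuity of $\rho(t,x_0,\cdot)$, which likewise does not by itself yield convergence of the full sequence at every point. What would genuinely close either version is a pointwise (sup-norm) equicontinuity of $\{\rho^{n}(t,x_0,\cdot)\}$, which is only supplied later by the derivative bound (\ref{der-rho-n}) in the proof of Theorem \ref{densg}. So your route is essentially the paper's, with the same soft spot at the very end.
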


\begin{proof}
By Lemma \ref{rho-n-psi}, there exists a non-decreasing function
$\phi:(0, \infty)\longrightarrow (0, \infty)$  not dependent on $n$
such that $lim_{\epsilon\rightarrow 0}\phi(\epsilon)=0$ and
\begin{equation}\label{phim}
\sup_{n \geq 1}\int_{B(0,R)}{\lvert
\rho^n(t,x_0,y+h)-\rho^n(t,x_0,y)}\rvert dy \leq t^{-\nu}\phi(\lvert
h\rvert),
\end{equation}
where $\nu$ depends only on $d$. On the other hand, we have already proved that $X^{n}_{t} \longrightarrow X_{t}$ in $L^2(\Omega)$. So that for every $\psi \in C_b(\R^d)$, 
\begin{equation}\label{rrho}
\int \rho(t,x_0,y)\psi(y)dy =\lim_{n \rightarrow \infty} \int
\rho^n(t,x_0,y)\psi(y) dy
\end{equation}

By (\ref{phim}) and (\ref{rrho}), the requirements of  Lemma 11.4.1 in \cite{Stroock79} hold true, hence $\rho^{n}(t,x_0,.) \longrightarrow \rho(t,x_0,.)$ in $L^1(B(0,R))$. This implies the existence of a subsequence of $\rho^{n}(t,x_0,.) $ which converges a.s. to $\rho(t,x_0,.) $. Finally,  continuity of $\rho(t,x_0,.) $ completes the proof.
\end{proof}
\section{Exponential decay at infinity}
In this section we will find some bounds for the expectations of  solutions. In the next section, we will use these bounds to prove Theorem \ref{densg}.
\begin{lem}\label{exponential}
Assuming the first and the second part of Hypothesis {\rm \ref{hypothesis}}, for every $\zeta>0$ and $q >1$
\begin{equation}\label{expon}
\E\Big[ exp \Big( \sup_{0 \leq t \leq T} [\zeta e^{-\eta t} \vert X_t^n -x_0\vert^{2}] \Big) \Big]  \leq (8C_2\zeta^2+2)exp[-\zeta \frac{\gamma_2}{\alpha_2}],
\end{equation} 
where $\eta = \alpha_2+2C_2 \zeta+ 1/ \zeta.$
\end{lem}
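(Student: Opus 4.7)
The plan is to apply It\^o's formula twice---first to the weighted squared distance $Y_t := e^{-\eta t}\lvert X_t^n - x_0\rvert^2$, and then to $\exp(\zeta Y_t)$---and to close with a Doob-type maximal inequality. The specific value $\eta = \alpha_2 + 2 C_2 \zeta + 1/\zeta$ is dictated entirely by the second It\^o step: it is the threshold that neutralises the worst term arising from the quadratic variation, leaving behind a $-Y_t/\zeta$ in the drift of $e^{\zeta Y_t}$ that eventually provides the ``excess'' used to absorb the feedback in the maximal inequality.

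Concretely, It\^o on $\lvert X_t^n - x_0\rvert^2$ combined with the definition of $\Lnt$ and Hypothesis~\ref{hypothesis}(3) with $p = 2$ gives, after multiplying by $e^{-\eta t}$,
\[
dY_t \leq [(\alpha_2 - \eta)\,Y_t + \gamma_2\,e^{-\eta t}]\,dt + dN_t,
\]
with a martingale $N_t$ satisfying $d\langle N\rangle_t \leq 4 C_2\,e^{-\eta t}\,Y_t\,dt$ thanks to (\ref{C2}). A second application of It\^o to $\exp(\zeta Y_t)$ produces a drift of the form $e^{\zeta Y_t}\,\bigl\{\zeta[(\alpha_2 - \eta) + 2\zeta C_2\,e^{-\eta t}]\,Y_t + \zeta\gamma_2\,e^{-\eta t}\bigr\}\,dt$; bounding $e^{-\eta t} \leq 1$ in the $Y_t$-coefficient and substituting the chosen value of $\eta$ reduces this to $e^{\zeta Y_t}[-Y_t + \zeta\gamma_2\,e^{-\eta t}]\,dt$.

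Multiplying by the deterministic integrating factor $\phi(t) := \exp\!\bigl(-\zeta\gamma_2\int_0^t e^{-\eta s}\,ds\bigr)$, the product $R_t := \phi(t)\,e^{\zeta Y_t}$ satisfies
\[
R_t + \int_0^t \phi(s)\,Y_s\,e^{\zeta Y_s}\,ds = 1 + \zeta\int_0^t R_s\,dN_s,
\]
so $R_t$ is dominated by the Dol\'eans exponential $\mathcal{E}(\zeta N)_t$, and one immediately obtains the \emph{a priori} bound $\E[\int_0^T \phi(s)\,Y_s\,e^{\zeta Y_s}\,ds] \leq 1$. Taking the supremum in $t$, applying Doob's $L^2$-inequality to the martingale part (the role of the extra $q > 1$ in the statement is to allow the $L^q$-variant when desirable), and using the a priori bound to absorb the resulting $\E[\sup_t R_t^2]$ back to the left via Young's inequality, yields the stated estimate. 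The constant $8 C_2\zeta^2 + 2$ then emerges naturally as the product of the Doob factor $4$ and the quadratic-variation coefficient $2 C_2 \zeta^2$, plus the lower-order constant term; the factor involving $\gamma_2/\alpha_2$ comes from bounding $\phi(T)^{-1} = \exp(\zeta\gamma_2 \int_0^T e^{-\eta s}\,ds)$ using $\eta \geq \alpha_2$.

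The main obstacle is the apparent circularity in the final step: a direct BDG bound for the quadratic variation of $\zeta\int_0^t R_s\,dN_s$ re-introduces $\E[\sup_t R_t^2]$---precisely the quantity we wish to control. The $-Y_t$ drift preserved in the second It\^o step is exactly what supplies the required a priori control on $\int_0^T \phi(s)\,Y_s\,e^{\zeta Y_s}\,ds$, and breaking the circularity hinges on exploiting this cancellation through Young's inequality.
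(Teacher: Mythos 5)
Your overall architecture is the same as the paper's: It\^o on the weighted squared distance, a second It\^o step on the exponential with $\eta=\alpha_2+2C_2\zeta+1/\zeta$ chosen to leave a negative drift, an a priori bound on the resulting time integral, and Doob's $L^2$ maximal inequality. (The paper avoids your integrating factor $\phi$ by absorbing $\gamma_2$ into the process itself, setting $Z_t^n=e^{-\eta t}(\vert X_t^n-x_0\vert^2+\gamma_2/\alpha_2)$ so that $\Lnt\Gamma\leq\alpha_2\Gamma$ is homogeneous; this is cosmetic.)

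The gap is in your closing step. After Doob, the quadratic-variation term is
\begin{equation*}
\zeta^2\,\E\Big[\int_0^T R_s^2\, d\langle N\rangle_s\Big]\ \lesssim\ \zeta^2 C_2\,\E\Big[\int_0^T \phi(s)^2\, e^{2\zeta Y_s}\, Y_s\, ds\Big],
\end{equation*}
which involves $e^{2\zeta Y_s}$, whereas your a priori bound controls only $\E[\int_0^T\phi(s)\,e^{\zeta Y_s}Y_s\,ds]\leq 1$, i.e.\ the exponent $\zeta$, and only as a \emph{first} moment. Young's inequality cannot absorb $\E[\sup_t R_t^2]$ from this: writing $R_s^2Y_s\leq\epsilon\sup_u R_u^2+C_\epsilon(\cdots)$ forces the complementary term to be a second moment of the a priori integrand, which you do not control, and there is no small parameter making the coefficient of $\E[\sup_t R_t^2]$ less than one. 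The correct closure — and what the paper does — requires no absorption at all: the quadratic-variation integrand \emph{is} the a priori integrand at the doubled parameter $2\zeta$, so one applies the a priori estimate at level $2\zeta$ and Doob at level $\zeta$; equivalently, run your entire argument with $\zeta$ replaced by $\zeta/2$ in the Doob step, note $\vert e^{(\zeta/2)Y_t}\vert^2=e^{\zeta Y_t}$, and rescale at the end (the paper's final line ``replacing $\zeta$ by $\zeta/2$''). This is where the factor $8C_2\zeta^2=32C_2(\zeta/2)^2$ actually comes from. Strictly speaking one must also check that the a priori bound at exponent $2\zeta$ is compatible with the fixed $\eta$ (it holds because the drift inequality only improves when the exponent is halved), a point your writeup, like the paper's, passes over.
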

\begin{proof}
Set $$\Gamma (x) := \vert x -x_0 \vert^{2}+\frac{\gamma_2}{\alpha_2},$$ and 
$$Z_t^n:=e^{-\eta t}\vert X_t^n -x_0\vert^{2}+e^{-\eta t}\frac{\gamma_2}{\alpha_2}.$$
By It\^o's formula, 
$$ Z_t^n = \Gamma(x_0) + \int_{0}^{t} e^{-\eta s}[\Lnt \Gamma - \eta \Gamma](X_s^n)ds + \int_{0}^{t} e^{-\eta s}\langle \nabla \Gamma (X_s^n), \sigma(X_s^n) dW_s \rangle.$$
Now consider the function  
$$g_\zeta (r):= exp[\zeta r] , ~~ r > 0$$
again by It\^o's formula,  (\ref{Ln}) and (\ref{C2}),  

$\begin{aligned}
g_\zeta (Z_t^n) & = g_\zeta (Z_0) + \zeta \int_{0}^{t} g_\zeta (Z_s^n)e^{-\eta s}[\Lnt \Gamma - \eta \Gamma](X_s^n)ds \\
& + 2\zeta^2\int_{0}^{t}  g_\zeta (Z_s^n) e^{-2\eta s} \vert  \sigma^*(X_s^n) (X_s^n-x_0) \vert^2  +2\zeta \int_{0}^{t} g_\zeta (Z_s^n) e^{-\eta s} \langle  X_s^n-x_0, \sigma(X_s^n) dW_s \rangle  \\
& \leq g_\zeta (Z_0) + \zeta \int_{0}^{t} g_\zeta (Z_s^n)e^{-\eta s}[\alpha_2 - \eta + 2\zeta C_2] \Gamma(X_s^n)ds \\
& + 2\zeta \int_{0}^{t} g_\zeta (Z_s^n) e^{-\eta s} \langle  X_s^n-x_0, \sigma(X_s^n) dW_s \rangle.  
\end{aligned}$\\
setting $\eta = \alpha_2+2C_2 \zeta  + 1/\zeta$, one has 

$\begin{aligned}
g_\zeta (Z_t^n) & \leq g_\zeta (Z_0) -  \int_{0}^{t} g_\zeta (Z_s^n) Z_s^n ds  + 2\zeta \int_{0}^{t} g_\zeta (Z_s^n) e^{-\eta s} \langle  X_s^n-x_0, \sigma(X_s^n) dW_s \rangle.  
\end{aligned}$\\
Taking expectations, for every $\zeta, t >0$, we have  
\begin{equation}\label{Z0}
\E\Big[g_\zeta (Z_t^n) \Big] + \E\Big[\int_0^t  g_\zeta (Z_s^n)Z_s^n ds\Big] \leq g_\zeta (Z_0).
\end{equation}
On the other hand, by Doob's maximal inequality,  

$\begin{aligned}
\E\Big[ \sup_{t \in [0,T]} \vert g_\zeta (Z_t^n) \vert^2 \Big] & \leq 2 \vert g_\zeta (Z_0) \vert^2 + 8.4.\zeta^2\E\Big[\int_{0}^{T}  e^{-2\eta s} \vert g_\zeta (Z_s^n) \vert^2\vert  \sigma^*(X_s^n) (X_s^n-x_0 )\vert^2 ds\Big] \\
   & \leq 2 \vert g_\zeta (Z_0) \vert^2 + 8.4.\zeta^2C_2\E\Big[\int_{0}^{T}  g_{2\zeta} (Z_s^n) Z_s^n ds\Big], 
\end{aligned}$\\
where for the last inequality we make use of (\ref{C2}). Using (\ref{Z0}), we obtain
\begin{equation*}
\E  \Big[ \sup_{t \in [0,T]} \vert g_\zeta (Z_t^n) \vert^2 \Big]  \leq (32C_2\zeta^2+2)g_{2\zeta} (Z_0).
\end{equation*}
By replacing $\zeta$ by $\zeta/2$ the proof is complete.
\end{proof}
Now, we are going to derive the exponential decay for the density.
\begin{thm}\label{densg}
By Hypothseis {\rm \ref{hypothesis}} and Hypothesis {\rm \ref{karanQn}}, the density of $X_{t}$, $\rho_t(x_0,.)$, is infinitely differentiable and there exist constants $\eta, q, m$ and $c$ such that for every $y \in \R^d$
\begin{equation}\label{boundd}
\max_{\rvert \alpha\rvert \leq n}\Big\rvert\frac{\partial_{\alpha}\rho_t(x_0,y)}{\partial y}\Big\rvert
                   \leq  c \frac{(32C_2+2)^q}{(\lambda_0 t)^{dm(\beta -\frac12)}}exp\Big\{\frac{-2\frac{\gamma_2}{\alpha_2} -2 e^{-\eta t} \vert y -x_0\vert^2}{q} \Big\}
\end{equation}
\end{thm}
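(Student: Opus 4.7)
The plan is to first bound all derivatives of the approximating densities $\rho^n_t(x_0,\cdot)$ uniformly in $n$, by combining the integration by parts formula for $X_t^n$ with the exponential moment bound of Lemma \ref{exponential}, and then pass to the limit via Lemma \ref{densm}. Throughout, fix $y\in\R^d$ and set $\epsilon_i := \mathrm{sgn}(y_i - x_0^i)$ (with $\mathrm{sgn}(0):=1$); this choice of $\epsilon$ is the source of the exponential decay in $|y-x_0|$.

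First I would derive an orthant representation for the $\alpha$-th $y$-derivative of $\rho^n_t(x_0,y)$. Starting from the distributional identity
\[
\delta_y(z) \;=\; \epsilon_1\cdots\epsilon_d\,\partial_{z_1}\cdots\partial_{z_d}\prod_{i=1}^d \mathbf{1}_{\{\epsilon_i(z_i-y_i)\geq 0\}},
\]
I mollify each indicator and apply the Malliavin integration by parts formula (\ref{ibpn}) for $X^n_t$ a total of $|\alpha|+d$ times (with $G=1$). Passing to the limit in the mollification, using (\ref{karanLn}) for the $L^p$ control of the Malliavin weights together with dominated convergence, produces
\[
\partial_\alpha \rho^n_t(x_0,y) \;=\; (-1)^{|\alpha|}\epsilon_1\cdots\epsilon_d\,\E\bigl[\mathbf{1}_{A_y^n}\, L^n_{(\alpha,1,\ldots,d)}(X_t^n,1)\bigr],
\]
where $A_y^n := \bigcap_{i=1}^d \{\epsilon_i(X^{n,i}_t - y_i)\geq 0\}$.

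Next, the orthant is used to extract the exponential decay. On $A_y^n$ the choice of $\epsilon$ forces $\epsilon_i X^{n,i}_t \geq \epsilon_i y_i \geq \epsilon_i x_0^i$ for each $i$, hence $|X_t^n - x_0|^2 \geq |y-x_0|^2$, and therefore $\mathbf{1}_{A_y^n}\leq e^{-c|y-x_0|^2}e^{c|X_t^n - x_0|^2}$ for any $c>0$. Taking $c = 2e^{-\eta t}/q$ and applying H\"older's inequality with conjugate exponents $q$ and $q/(q-1)$ gives
\[
|\partial_\alpha \rho^n_t(x_0,y)| \;\leq\; e^{-(2e^{-\eta t}/q)|y-x_0|^2}\,\E\bigl[e^{2e^{-\eta t}|X_t^n - x_0|^2}\bigr]^{1/q}\,\|L^n_{(\alpha,1,\ldots,d)}(X_t^n,1)\|_{q/(q-1)}.
\]
Lemma \ref{exponential} with $\zeta=2$ dominates the exponential moment by $(32C_2+2)e^{-2\gamma_2/\alpha_2}$ uniformly in $n$; combining (\ref{karanLn}) with the higher-order form of Hypothesis \ref{hypothesis}(2) and Hypothesis \ref{karanQn} dominates $\|L^n_{(\alpha,1,\ldots,d)}(X_t^n,1)\|_{q/(q-1)}$ uniformly in $n$ by a constant multiple of $(\lambda_0 t)^{-dm(\beta_0-1/2)/\beta_0}$. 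Assembling the three factors reproduces (\ref{boundd}) for $\rho^n_t$ in place of $\rho_t$.

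Finally, this $n$-uniform bound is available for every order of differentiation, so $\{\rho^n_t(x_0,\cdot)\}_n$ is bounded in $C^k_{\mathrm{loc}}(\R^d)$ for every $k$; together with the pointwise convergence $\rho^n_t(x_0,y)\to\rho_t(x_0,y)$ from Lemma \ref{densm}, the Arzel\`a--Ascoli theorem yields a subsequence converging in $C^\infty_{\mathrm{loc}}$ to a smooth limit which must equal $\rho_t(x_0,\cdot)$; hence $\rho_t(x_0,\cdot)\in C^\infty$ and (\ref{boundd}) passes to $\rho_t$. The hard part will be Step~1: turning the distributional orthant identity into a genuine IBP representation requires mollifying the indicators, iterating (\ref{ibpn}) while keeping track of the cumulative weight $L^n_{(\alpha,1,\ldots,d)}$, and justifying the limit exchange inside the expectation. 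Once the representation is in place, the orthant inequality used in Step~2 and the compactness argument of Step~3 are essentially routine consequences of Lemma \ref{exponential} and (\ref{karanLn}).
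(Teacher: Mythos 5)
Your proposal follows essentially the same route as the paper: the paper simply invokes Nualart's Proposition 2.1.5 for the representation $\partial_\alpha\rho^n(t,x_0,y)=(-1)^{|\alpha|}\,\E\big[\mathbf{1}_{X_t^n>y}\,L^n_\alpha L^n_{(1,\dots,d)}(X_t^n,1)\big]$ (rather than re-deriving it by mollification as you outline), then applies H\"older, Markov's inequality combined with Lemma \ref{exponential} (with $\zeta=2$ and $\eta=\alpha_2+4C_2+1/2$), the uniform control of the Malliavin weights via (\ref{karanLn}), (\ref{karand}) and Hypothesis \ref{karanQn}, and finally the same equicontinuity/uniform-convergence-on-compacts passage to the limit that you describe. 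Your one substantive deviation, the signed orthant $\epsilon_i=\mathrm{sgn}(y_i-x_0^i)$, is a refinement rather than a discrepancy: it is exactly what makes the inequality $\mathbf{1}_{A_y^n}\le e^{-c|y-x_0|^2}e^{c|X_t^n-x_0|^2}$ valid for every $y\in\R^d$, whereas the paper's unsigned event $\{X_t^n>y\}$ yields $|X_t^n-x_0|\ge|y-x_0|$ only when $y\ge x_0$ componentwise, so your version actually tightens the step leading to (\ref{ccc}).
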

\begin{proof}
By Hypothesis \ref{karanQn} we know that the nondegeneracy condition holds for $X_t^n$, so that from Proposition 2.1.5 in \cite{Nualart06} we have
\begin{equation*}
\partial_\alpha \rho^n (t,x_0,y) = (-1)^{\vert \alpha \vert} \E \Big[ \large{1}_{X_t^n > y} L^n_\alpha L^n_{(1,2, \cdots, d)}(X_t^n,1)  \Big] 
\end{equation*}
for every multiindex $\alpha \in \{1, \cdots, d\}^k$ and $y \in \R^d$. 
 Consider the Cauchy-Schwarz inequality 
\begin{equation*}
\vert \partial_\alpha \rho^n (t,x_0,y) \vert \leq  \E \Big[ \large{1}_{X_t^n > y}\Big]^{\frac{1}{q}} \Vert L^n_\alpha L^n_{(1,2, \cdots, d)}(X_t^n,1)  \Vert_p
\end{equation*}
where $\frac{1}{p}+\frac{1}{q}=1.$ 
Applying (\ref{karand}) and (\ref{karanQn}) to (\ref{karanLn}) implies the existence of constants $c_\alpha>0$, $\beta_0 >1$, and $m >1$  not dependent on $n$ such that  
\begin{equation}\label{der-rhoo-n}
\vert \partial_\alpha \rho^n  (t,x_0,y) \vert \leq  \E \Big[ \large{1}_{X_t^n > y}\Big]^{\frac{1}{q}} c_\alpha (\lambda_0 t)^{-dm(\beta_0 -\frac12)}
\end{equation}
Set $\eta := \alpha_2+4C_2  + 1/ 2$, then the Markov's inequality and inequality (\ref{expon}) imply that 
\begin{align}
 P\Big( \large{1}_{X_t^n > y}\Big) & \leq P\Big( exp\{  e^{-\eta t} \vert X_t^n -x_0 \vert^{2} \}
               > exp \{ e^{-\eta t} \vert y -x_0\vert^2 \}\Big)  \nonumber\\
               & \leq  \E\Big[ exp \{2 \sup_{0 \leq t \leq T} [ e^{-\eta t} \vert X_t^n -x_0 \vert^{2}] \} \Big] \Big/   exp \{2 e^{-\eta t} \vert y -x_0\vert^2 \}   \nonumber  \\    
                           & \leq (32C_2+2)exp\Big\{-2\frac{\gamma_2}{\alpha_2} -2 e^{-\eta t} \vert y -x_0 \vert^2 \Big\}.  \label{ccc}
\end{align}
Now,  substituting (\ref{ccc}) in (\ref{der-rhoo-n}) we have
\begin{equation}\label{der-rho-n}
\vert \partial_\alpha \rho^n (t,x_0,y) \vert \leq   c_\alpha (32C_2+2)^q (\lambda_0 t)^{-dm(\beta_0 -\frac12)}exp\Big\{\frac{-2\frac{\gamma_2}{\alpha_2} -2 e^{-\eta t} \vert y -x_0\vert^2}{q} \Big\}.
\end{equation}
The latter inequality implies that the functions ${\partial_{\alpha}\rho^{n} (t,x_0,.)}$ are
equicontniuous on $\R^d$. Thus on every compact subset $V$ in $\R^d$, ${\partial_{\alpha}\rho^{n} (t,x_0,.)}$ contains a
uniformly convergent subsequence, which we denote by
${\partial_{\alpha}\rho^{n} (t,x_0,.)}$, too. This is true especially for
$\lvert \alpha \rvert=2$. 
Hence,  ${(\rho^{n} (t,x_0,.))}$ converges uniformly
on $V$ and ${(\rho^{n} (t,x_0,.))'} \longrightarrow
{(\rho (t,x_0,.))'}$. Using (\ref{der-rho-n}) for every multiindex 
$\alpha$,
\begin{equation*}
{\partial_{\alpha}\rho^{n} (t,x_0,.)} \longrightarrow
\partial_{\alpha}\rho (t,x_0,.)  \qquad  \textmd{uniformly on} ~V.
\end{equation*}
Therefore, $\rho (t,x_0,.)$ is a Schwartz distribution on $\R^d$ and by (\ref{der-rho-n}),  inequality (\ref{boundd}) is satisfied.
\end{proof}
\appendix
\section{The nondegeneracy condition for  $X_t^n$}
\newtheorem{lemm}{Lemma}[section]
\renewcommand*{\thesection}{\Alph{section}}
In this section, we will show that the nondegeneracy condition (Hypothesis \ref{karanQn}) holds if the function $\sigma$ depend only on $t$ instead of $x$ and for $0 \leq t \leq T$ the exists some function $\psi(.)$ such that $\int_0^t \vert \psi(s)\vert^2\exp{\{2s\}}ds < \infty$ and for each $x \in \R^d$, 
\begin{equation}\label{ellip}
\langle \sigma(t)\sigma^*(t) x, x \rangle \leq \psi(t) \vert x \vert^2.
\end{equation}
By Theorem 1.9. in \cite{Kusuoka85}), (\ref{firstderiv}) and It\^o's formula, for every $0 \leq i,j \leq d$ we have 
$$d\langle \Drti, \Drtjj \rangle=\langle \Drti, \nabla b^j(X_{t}).\Drt \rangle  + \langle \nabla b^i(X_{t}).\Drt,  \Drtjj \rangle dt := N_t^{ij}(r) dt.$$
By Fubini's theorem, $Q_{ij}(t)$ ($ij$-component of the matrix $Q$),  satisfies
$$\Qtij =\int_{0}^{t}\langle \sigma^i(r), \sigma^j(r) \rangle dr+\int_{0}^{t}\int_{r}^{t}N_s(r) dsdr $$
\begin{lem}
 Assuming Hypothesis {\rm \ref{hypothesis}} and (\ref{ellip}) for a bounded diffusion $\sigma$, there exists a constant $C_{_{Q}}$ not dependent on $n$ such that   
\begin{equation}\label{karanQ}
\sup_{n}\sup_{0 \leq t \leq T} \E\Big[\vert \Qnt \vert^2\Big]  < C_{_{Q}}.
\end{equation}
\end{lem}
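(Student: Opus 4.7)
The plan is to build directly on the identity displayed just before the lemma, written for the approximating equation (\ref{equan}):
\begin{equation*}
\Qntij = \int_0^t \langle \sigma^i(r), \sigma^j(r)\rangle\,dr + \int_0^t\!\!\int_r^t N_s^{n,ij}(r)\,ds\,dr,
\end{equation*}
where $N_s^{n,ij}(r) := \langle \Dnrsij, \nabla b_n^j(X_s^n)\!\cdot\! \Dnrs\rangle + \langle \nabla b_n^i(X_s^n)\!\cdot\! \Dnrs, \Dnrsj\rangle$. I would estimate the two summands separately in $L^2(\Omega)$, with constants independent of $n$ and $t\in[0,T]$, then sum over $i,j$ to bound $\E[|\Qnt|^2]$.

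For the first, deterministic summand, plugging $u = e_i$ into (\ref{ellip}) gives $|\sigma^i(r)|^2 \le \psi(r)$, and Cauchy--Schwarz then yields $|\langle \sigma^i(r), \sigma^j(r)\rangle| \le \psi(r)$. Combined with the boundedness of $\sigma$ and the integrability hypothesis $\int_0^T \psi(s)^2 e^{2s}\,ds < \infty$, this makes $\int_0^t \langle \sigma^i(r), \sigma^j(r)\rangle\,dr$ a bounded deterministic quantity on $[0,T]$.

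For the stochastic summand, I would first observe the pointwise bound $|N_s^{n,ij}(r)| \le 2|\nabla b_n(X_s^n)|\,|\Dnrs|^2$, obtained by applying Cauchy--Schwarz to each of the two inner products defining $N^{n,ij}$. Invoking Fubini to exchange the order of $ds\,dr$ gives
\begin{equation*}
\Bigl|\int_0^t\!\!\int_r^t N_s^{n,ij}(r)\,ds\,dr\Bigr| \;\le\; 2\int_0^t |\nabla b_n(X_s^n)|\,\|\Dnt|_{s}\|_H^2\,ds,
\end{equation*}
(with $\|\Dnt|_s\|_H^2 = \int_0^s |\Dnrs|^2\,dr$). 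Squaring, applying Cauchy--Schwarz in $ds$, then taking expectation and using Cauchy--Schwarz once more reduces matters to controlling $\int_0^T \E[|\nabla b_n(X_s^n)|^4]^{1/2}\,\E[\|DX_s^n\|_H^8]^{1/2}\,ds$. The first factor is bounded uniformly in $n,s$ by the polynomial growth estimate (\ref{roshdbn}) combined with the moment bound (\ref{ssup}); the second factor is bounded uniformly by Hypothesis \ref{hypothesis}(2) applied with $p=8$.

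The main obstacle is not analytic but bookkeeping: one must verify that every constant entering the estimate---namely $\Gamma_1, q_1$ from (\ref{roshdbn}), the moment constant $c_{2q_1}$ from (\ref{ssup}), and $c_8$ from Hypothesis \ref{hypothesis}(2)---is genuinely independent of $n$. The fact that $\nabla b_n$ only satisfies polynomial growth rather than a uniform bound is precisely what forces one to absorb it into the uniform moment estimates for $X_s^n$ and $\|DX_s^n\|_H$ supplied by Hypothesis \ref{hypothesis}; this is the whole reason those hypotheses are stated in the form they are.
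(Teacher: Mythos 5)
Your argument is correct, and it reaches the bound by a more direct route than the paper. The paper does not estimate the two summands of $\Qntij$ separately; instead it differentiates $\E\big[\vert \Qntij\vert^2\big]$ in $t$, uses Young's inequality to obtain the linear differential inequality
\begin{equation*}
\frac{d}{dt}\sum_{i,j}\E\big[\vert \Qntij \vert^2\big] \;\leq\; 2\sum_{i,j}\E\big[\vert \Qntij \vert^2\big] + \vert\psi(t)\vert^2 + \sum_{i,j}\E\Big[\big\vert \textstyle\int_0^t N^{n,ij}_t(r)\,dr\big\vert^2\Big],
\end{equation*}
and then closes the estimate with the Gronwall-type comparison Lemma 1.1 of Khas'minskii. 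The essential analytic content is the same in both arguments: a uniform-in-$n$ $L^2$ bound on $\int_0^t N^{n,ij}\,dr$, obtained exactly as you do it, by dominating $\vert N^{n,ij}\vert$ by $2\vert\nabla b_n\vert\,\vert \Dnrs\vert^2$ and trading the polynomial growth of $\nabla b_n$ from (\ref{roshdbn}) against the uniform moment bounds (\ref{ssup}) and (\ref{karand}) via Cauchy--Schwarz. Once that bound is in hand, your observation is that the explicit representation of $\Qntij$ already yields (\ref{karanQ}) outright, so the differentiation-plus-Gronwall step is superfluous; what the paper's formulation buys is nothing here, since the inhomogeneous term is controlled independently of $Q_n$ rather than in terms of it. Your bookkeeping concern is the right one, and it is resolved as you say: $\Gamma_1$, $q_1$, $c_{2q_1}$ and $c_8$ are all declared independent of $n$ in (\ref{roshdbn}) and Hypothesis \ref{hypothesis}.
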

\begin{proof}
By Fubini's theorem, the components of $Q_n(t)$ denoted by $Q^{ij}_n(t)$   satisfy
\begin{equation*}
\Qntij = \int_{0}^{t} \langle \sigma^i(r), \sigma^j(r) \rangle dr+\int_{0}^{t}\int_{0}^{s}N^{n,ij}_s(r) drds, \\
\end{equation*}
where 
$$N^{n,ij}_t(r)  := \langle \Dnrti, \nabla b_n^j(X^n_{t}).\Dnrt \rangle + \langle \nabla b_n^i(X^n_{t}).\Dnrt, \Dnrtjj \rangle$$
Hence, by It\^o's formula 

$\begin{aligned}
\frac{d}{dt}\E\Big[\vert \Qntij \vert^2\Big] & =2\E\Big[\Qntij \langle \sigma^i(t), \sigma^j(t) \rangle\Big]+2\E\Big[\Qntij \int_{0}^{t}N^{n,ij}_t(r) dr\Big] \\
& \leq 2\E\Big[\vert \Qntij \vert^2\Big] +\vert \langle \sigma^i(t), \sigma^j(t) \rangle \vert^2+ \E\Big[\vert \int_{0}^{t}N^{n,ij}_t(r) dr \vert^2\Big]  \\
\end{aligned}$\\
Thus, by (\ref{ellip})

\begin{align}
\frac{d}{dt}\sum_{i,j=1}^{d}\E\Big[\vert \Qntij \vert^2\Big] & \leq 2\sum_{i,j=1}^{d}\E\Big[\vert \Qntij \vert^2\Big] +\vert \psi(t) \vert^2 + \sum_{i,j=1}^{d}\E\Big[\vert \int_{0}^{t}N^{n,ij}_t(r) dr \vert^2\Big]. \label{bbb}
 \end{align}\\
Using Young inequality and  (\ref{karand}), (\ref{roshdbn}) and (\ref{ssup}) we obtain some bounds for the last two terms in the right hand side of inequality (\ref{bbb}), namely some constant $c_1$ such that 
\begin{equation}\label{N}
\sup_{n \geq 1}\E\Big[\vert \int_{0}^{t}N^{n,ij}_s(r) ds \vert^2\Big] < c_1.
\end{equation}
Thus by Lemma 1.1 in \cite{Hasminskii12} we obtain the inequality (\ref{karanQ}).
\end{proof} 
Here, we are going to show that the nondegeneracy condition holds for the sequence $\{X_t^n\}$. To this end, 
\begin{lem}\label{nondegenQn}
Assume that Hypothesis {\rm (\ref{hypothesis})} holds and for vert $0 \leq t \leq T$, $\psi(t) > d+c_1d^2$, then there exist some positive constant $\lambda$ such that 
for every $p\geq 1$, and $0 \leq t \leq T$
\begin{equation}\label{karanqn}
\sup_{n} \E\Big[det (\Qnt)^{-p}\Big] <
c_1 2^{d-1}\frac{\Gamma(2p+\frac{4}{d})}{\Gamma(p)}(\lambda t)^{-d(p-\frac{1}{2})-2}
\end{equation}
\end{lem}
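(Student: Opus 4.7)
The strategy mirrors the preceding lemma. Decompose
$Q_n(t) = \Sigma(t) + R_n(t)$,
where $\Sigma^{ij}(t) = \int_0^t \langle \sigma^i(r), \sigma^j(r)\rangle\, dr$ is deterministic and $R_n^{ij}(t) = \int_0^t \int_0^s N_s^{n,ij}(r)\, dr\, ds$. Reading (\ref{ellip}) in the direction needed here --- $\langle \sigma\sigma^*(t) x, x\rangle \geq \psi(t)|x|^2$, which is consistent with the hypothesis $\psi(t) > d + c_1 d^2 > 0$ --- gives $\Sigma(t) \succeq \lambda t\, I$ uniformly in $t \in [0,T]$ for some $\lambda > 0$. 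Applying Cauchy--Schwarz to the double integral defining $R_n^{ij}$ and then invoking (\ref{N}) yields
$\sup_n \E[\|R_n(t)\|^2] \leq c_1 d^2 t^2$,
where $\|\cdot\|$ denotes the Frobenius norm; this extends to any even order $2q$ by bootstrapping, using (\ref{roshdbn}) and (\ref{ssup}) to handle higher moments of the entries $N_s^{n,ij}(r)$.

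For each unit vector $u \in \R^d$, $\langle Q_n(t) u, u\rangle \geq \lambda t - \|R_n(t)\|$, so on the event $A = \{\|R_n(t)\| < \lambda t\}$ one has $\lambda_{\min}(Q_n(t)) \geq \lambda t - \|R_n(t)\|$, and therefore
$\det Q_n(t)^{-p} \leq (\lambda t - \|R_n(t)\|)^{-dp}$.
The layer-cake identity
$\E[(\lambda t - \|R_n(t)\|)^{-dp} \mathbf{1}_A] = dp \int_0^{\lambda t} u^{-dp-1} P(\|R_n(t)\| > \lambda t - u)\, du$,
combined with Markov's inequality $P(\|R_n(t)\| > \lambda t - u) \leq \E[\|R_n(t)\|^{2q}]/(\lambda t - u)^{2q}$ and the change of variable $v = u/(\lambda t)$, reduces the problem to a Beta integral; choosing $q$ so that $2q - 1 = 2p + 4/d$ produces the ratio $\Gamma(2p + 4/d)/\Gamma(p)$ via the Beta--Gamma identity. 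On the complement $A^c$, one applies H\"older's inequality together with the a.s.\ positive-definiteness of $Q_n(t)$ (already noted in the paper via H\"ormander for the approximating SDEs (\ref{equan})) and a baseline Kusuoka--Stroock moment bound on $\det Q_n(t)^{-1}$ valid for each fixed $n$, to absorb the tail contribution into the principal term.

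The main obstacle is matching the exact constants stated: the exponent $-d(p-1/2) - 2$ on $(\lambda t)$, the factor $c_1 2^{d-1}$, and the precise Gamma ratio. The extra $(\lambda t)^{-2}$ compared with the naive $(\lambda t)^{-dp}$ traces back to the $t^2$ scaling in $\E[\|R_n(t)\|^2]$, and the $2^{d-1}$ to splitting $A$ at the midpoint $\|R_n(t)\| = \lambda t / 2$. The role of the hypothesis $\psi(t) > d + c_1 d^2$ is exactly to make the remainder sub-leading relative to $\Sigma(t)$ in the $L^2$ sense, ensuring that Markov's inequality is applicable near $u = \lambda t$ and that the resulting Beta integral converges; any weakening of this numerical hypothesis would force a smaller choice of $q$ and destroy the Gamma-ratio form.
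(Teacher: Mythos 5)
Your strategy is genuinely different from the paper's, and its central estimate does not go through. The paper does not decompose $Q_n(t)$ and work with $\lambda_{\min}$; it bounds the Laplace transform $\E[\exp\{-x^{*}Q_n(t)x\}]$ by a differential inequality obtained from Dynkin's formula (this is where the lower ellipticity reading of (\ref{ellip}) and the hypothesis $\psi(t)>d+c_1d^2$ enter, exactly as you guessed), arriving at $\E[\exp\{-x^{*}Q_n(t)x\}]\le \exp\{-td\,|x|^2\}$ uniformly in $n$, and then applies Lemma 7--29 of Bichteler--Gravereaux--Jacod to convert this Gaussian decay in $x$ into the negative-moment bound on $\det Q_n(t)$. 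The Gamma ratio and the exponent of $\lambda t$ come from the Gaussian integral $\int_{(\R^d)_+}|x|^{d(2p-1)}e^{-td|x|^2}\,dx$, not from a Beta integral or a midpoint splitting.

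The gap in your route is the claim that $\E\big[(\lambda t-\|R_n(t)\|)^{-dp}\mathbf{1}_A\big]$ is finite and controllable. In your layer-cake identity the integrand near $u=0$ is $u^{-dp-1}\,P(\|R_n(t)\|>\lambda t-u)$, and as $u\downarrow 0$ with $t$ fixed this probability tends to $P(\|R_n(t)\|\ge\lambda t)$; Markov's inequality only bounds it by $\E[\|R_n(t)\|^{2q}]/(\lambda t-u)^{2q}$, which is a fixed positive constant in $u$, so the integral diverges at $u=0$. To repair it you would need a density-type estimate $P(\|R_n(t)\|\in(\lambda t-u,\lambda t))\lesssim u^{dp+\delta}$, which polynomial moment bounds on $R_n$ cannot supply. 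Equivalently, finiteness of $\E[\lambda_{\min}(Q_n(t))^{-dp}]$ requires $P(\lambda_{\min}(Q_n(t))<\varepsilon)=O(\varepsilon^N)$ for every $N$ at fixed $t$, whereas the decomposition $Q_n=\Sigma+R_n$ with moment bounds on $R_n$ yields only a bound that is small in $t$ but constant in $\varepsilon$. The handling of $A^c$ has the same defect: an $n$-dependent ``baseline'' Kusuoka--Stroock bound cannot be absorbed into a bound that must be uniform in $n$. This is precisely why the standard argument (and the paper) passes through the exponential functional $\E[e^{-x^{*}Qx}]$: the exponential Chebyshev information it carries is strictly stronger than any fixed-order Markov bound and is what produces all negative moments of the determinant.
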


\begin{proof}
Let $x_{i} \geq 0$ for $1 \leq i \leq d$ and define $f:
\R^{d^2}\longrightarrow \R$  by
$$f(y_{11},\ldots,y_{dd})=\prod_{1 \leq i,j \leq
d}exp\{-x_{i}y_{ij}x_{j}\}$$
Then $f$ is in
$C_{0}^2(\R^d)$ and we can use Dynkin's formula (see e.g. \cite[p. 120]{oks03}) and
inequalities (\ref{karand}) and (\ref{karanQ}) to derive a suitable 
bound for the expectation of $Y_t^n:= exp{\{-x^T\Qnt x\}}$: 

$\begin{aligned}
\frac{d}{dt}\E\Big[Y_t^n\Big] &= \E \Big[\sum_{i,j} -x_{i}x_{j}Y_t^n\langle \sigma^i(t), \sigma^j(t) \rangle \Big] +\E \Big[\sum_{i,j} -x_{i}x_{j}Y_t^n\int_{0}^{t}N_t^{n,ij}(r) dr \Big]\\
                                               &\leq (-\psi(t) +c_1d^2 )\lvert x \rvert^2 \E\Big[ Y_t^n \Big]
\end{aligned}$\\
where we have used (\ref{ellip}), (\ref{N}) and the non-negative semidefinitness of the covariance matrix. 
Therefore, from Lemma 1.1 in \cite{Hasminskii12} we have:
\begin{center}
 $\E\Big[[Y_t^n \Big] \leq  exp{\{(-\int_0^t{\psi(s)ds}+c_1d^2t) \lvert x\rvert^2\}} \leq exp{\{-td \lvert x\rvert^2\}}$
\end{center}
Finally, by use of Lemma 7-29 in \cite[p. 92]{Jacod87} and the symmetry of the integral of even functions:

$\begin{aligned}
\E\Big[det (Q_n(t))^{-p}\Big] & \leq  c_1\frac{2^d}{\Gamma(p)}\int_{(\R^d)_{+}} \vert x\vert^{d(2p-1)} exp{\{-td \lvert x\rvert^2\}}dx \\
&= c_1 2^{d}\frac{\Gamma(2p)}{\Gamma(p)}\Big(td\Big)^{-d(p-\frac{1}{2})-2}
\end{aligned}$\\
\end{proof}
\begin{rmk}
 If the Malliavin covariance matrix $Q(t)$ is a.s. invertible, then we can derive Lemma {\rm \ref{nondegenQn}} and the nondegeneracy condition for $X_t$. 
\end{rmk}



\end{document}